\newtheorem{thm}{Theorem}[section]
\newtheorem{lem}[thm]{Lemma}
\newtheorem{prop}[thm]{Proposition}
\theoremstyle{definition}
\newtheorem{defin}[thm]{Definition}
\numberwithin{equation}{section}
\newcommand{\floor}[1]{\left[ #1 \right]}
\newcommand{\nrm}[1]{\mathord{\left\lVert #1 \right\rVert}}
\newcommand{\N}{\mathbb{N}}
\newcommand{\Z}{\mathbb{Z}}
\newcommand{\CN}{\mathcal{N}}
\newcommand{\CA}{\mathcal{A}}
\newcommand{\eand}{\,\,\,\text{ and }\,\,\,}
\newcommand{\eps}{\varepsilon}
\newcommand{\lset}{\left\lbrace}
\newcommand{\rset}{\right\rbrace}
\newcommand{\abs}[1]{\mathord{\left|#1\right|}}
\newcommand{\mcA}{\mathcal{A}}
\begin{document}

\title[Weighted Brun-Titchmarsh Inequality]{A Brun-Titchmarsh inequality for weighted sums over prime numbers}

\author[J. B\"uthe]{Jan B\"uthe}
\address{Universit\"at Bonn, Endenicher Allee 60
53115 Bonn, Germany}
\email{jbuethe@math.uni-bonn.de}
\date{\today}

\begin{abstract}
We prove explicit upper bounds for weighted sums over prime numbers in arithmetic progressions with slowly varying weight functions. The results generalize the well-known Brun-Titchmarsh inequality.
\end{abstract}

\subjclass[2010]{Primary 11N36; Secondary 11N13}

\keywords{sums over prime numbers, Brun-Titchmarsh inequality}

\maketitle

\section{Introduction}
In this paper we prove upper bounds for sums of the form
\begin{equation}\label{e:weighted-sum}
	\sum_{\substack{p\in I\\p\equiv l\bmod k}} f(p),
\end{equation}
where $I=[x,x+y]\subset[0,\infty)$ is an interval, $k$ and $l$ are coprime integers, and $f$ is a slowly varying weight function on $I$. The results generalize the well-known Brun-Titchmarsh inequality for the number of prime numbers in arithmetic progressions \cite{Titchmarsh1930,Iwaniec1982}.

The work is motivated by the following problem. Functions as the Riemann prime counting function $\pi^*(x)$ or the Chebyshov function $\psi(x)$ satisfy certain explicit formulas involving sums over the zeros of the Riemann zeta function \cite{Riemann59,Mangoldt85}. If one is interested in studying these functions via their explicit formulas, one has to deal with the problem that these sums converge only due to oscillation and are therefore difficult to handle. A natural way to overcome this problem is to study continuous (or smooth) approximations to the functions $\pi^*(x)$ and $\psi(x)$ \cite{Faber2013,Buethe2014}. The problem of estimating the approximation error then leads to sums of the form \eqref{e:weighted-sum}. As far as the author knows, this has been carried out in \cite{RS03} for the first time, to obtain short effective intervals containing prime numbers, where the Brun-Titchmarsh inequality is used to estimate such sums.

Apart from that, the results are likely to have other applications. One further application might be to improve the estimates for the error term in the asymptotic formula for the zero counting function of the Riemann zeta function, where similar sums occur \cite{Carneiro2012}.

 The proof of the results is elementary and uses a simple notion of weighted sieves.

\section{Notations}
We use the notations in \cite{Montgomery2006} in the context of number theory and those in \cite{Adams2003} for function spaces and their norms. In \eqref{e:weighted-sum} we will consider functions  $f\in W^{1,1}(I)$, the closure of $C^1(I)$ with respect to the norm
\[
\nrm{f}_{1,1,I} = \nrm{f}_{1,I} + \nrm{f'}_{1,I} = \int_I \abs{f(t)}\, dt + \int_I \abs{f'(t)}\, dt,
\]
which we regard as a subspace of $C^0(I)$ by the continuous embedding of $C^{1}(I)$, equipped with the norm $\nrm{\cdot}_{1,1,I}$, into $C^0(I)$.

Furthermore, we use the notations $I=[x,x+y]\subset [0,\infty)$, $\mcA = I\cap (l + k\Z)$, $\mcA_d = \{a\in \mcA \mid d|a\}$ and 
\[
 P(z,k) = \prod_{\substack{p\leq z\\p\nmid k}} p
 \]
throughout this paper. For explicit estimates, we also use Turing's $\Theta$-notation and say $f(x) = \Theta(g(x))$ for $x\in A$ iff $\abs{f(x)} \leq g(x)$ for all $x\in A$.

\section{Weighted Sieves}
The weighted sieves are based on the following lemma.
\begin{lem}\label{l:rs}
 Let $f\in W^{1,1}(I)$ and let $d,k\in\N$ satisfy $(d,k)=1$. Then we have
\begin{equation*}
 \Biggl| \frac{1}{kd}\int_I f(t)\, dt - \sum_{\substack{n\in I\cap(l+k\Z)\\d\mid n}} f(n)\Biggr| \leq \nrm{f}_{\infty,I} + \nrm{f'}_{1,I}.
\end{equation*}
\end{lem}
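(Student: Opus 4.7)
The plan is to reduce to a single arithmetic progression modulo $q:=kd$, enumerate its elements in $I$, and then approximate the sum by its natural integral averaging using a telescoping argument built on the fundamental theorem of calculus.

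Since $(d,k)=1$, the Chinese remainder theorem shows that the joint conditions $n\equiv l\pmod k$ and $d\mid n$ are equivalent to $n\equiv l'\pmod q$ for a unique residue $l'$ mod $q$. Let $n_1<\dots<n_N$ enumerate these elements in $I$; if $N=0$ then necessarily $y<q$ and $\bigl|\frac{1}{q}\int_I f\bigr|\leq\frac{y}{q}\nrm{f}_{\infty,I}\leq\nrm{f}_{\infty,I}$, so the claim is trivial. Henceforth assume $N\geq 1$ and set $a=n_1-x$, $b=x+y-n_N$, so $a,b\in[0,q)$ and $n_{j+1}-n_j=q$.

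For each $j=1,\dots,N-1$, the fundamental theorem of calculus gives $\abs{f(t)-f(n_j)}\leq\int_{n_j}^{n_{j+1}}\abs{f'}$ for every $t\in[n_j,n_{j+1}]$, whence $\abs{qf(n_j)-\int_{n_j}^{n_{j+1}}f\,dt}\leq q\int_{n_j}^{n_{j+1}}\abs{f'}$. Telescoping and dividing by $q$ gives $\bigl|\sum_{j=1}^{N-1}f(n_j)-\frac{1}{q}\int_{n_1}^{n_N}f\bigr|\leq\nrm{f'}_{1,[n_1,n_N]}$. The same first-order argument on the edge intervals yields $\frac{1}{q}\int_x^{n_1}f=\frac{a}{q}f(n_1)+R_1$ and $\frac{1}{q}\int_{n_N}^{x+y}f=\frac{b}{q}f(n_N)+R_2$ with $\abs{R_1}\leq\nrm{f'}_{1,[x,n_1]}$ and $\abs{R_2}\leq\nrm{f'}_{1,[n_N,x+y]}$.

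Assembling these three pieces, $\sum_{j=1}^{N}f(n_j)-\frac{1}{q}\int_I f$ equals $\frac{q-b}{q}f(n_N)-\frac{a}{q}f(n_1)+R$, where $\abs{R}\leq\nrm{f'}_{1,I}$ by disjointness of $[x,n_1]$, $[n_1,n_N]$, $[n_N,x+y]$. The coefficient $(q-b+a)/q$ in front of $\nrm{f}_{\infty,I}$ is $\leq 1$ exactly when $a\leq b$. The one real obstacle is the opposite case $a>b$; there I would instead use the intervals $[n_{j-1},n_j]$ for $j=2,\dots,N$ with value $f(n_j)$ (so that $f(n_1)$ rather than $f(n_N)$ becomes the extra evaluation), producing the boundary coefficient $(q-a+b)/q\leq 1$. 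Either way the claimed bound $\nrm{f}_{\infty,I}+\nrm{f'}_{1,I}$ follows.
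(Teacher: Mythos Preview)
Your argument is correct. Both your proof and the paper's are Gallagher-type arguments resting on the elementary inequality $\bigl|g(\xi)-\tfrac{1}{b-a}\int_a^b g\bigr|\le\int_a^b|g'|$ applied on intervals of one period. The paper first reduces to modulus $1$ by the substitution $f\mapsto f(k\cdot+l)$ and then splits $I=I'\cup I''$ with $|I'|=\lfloor y\rfloor$, leaving a \emph{single} leftover piece $I''$ of length $<1$ that absorbs the entire $\nrm{f}_{\infty}$ contribution; this avoids any case distinction. You instead keep the modulus $q=kd$, decompose $I=[x,n_1]\cup[n_1,n_N]\cup[n_N,x+y]$, and then split into the cases $a\le b$ and $a>b$ (reversing which endpoint of each period carries the sample value) to force the boundary coefficient below $1$. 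Your route is slightly more direct since it skips the rescaling, while the paper's is a touch cleaner at the boundary; neither buys anything materially over the other.
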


\begin{proof} The proof is essentially Gallagher's proof of the large sieve inequality, as noted by the referee. Let
\begin{equation}\label{e:rd-def}
 r_d =  \sum_{n\in \CA_d} f(n)-\frac{1}{kd}\int_I f(t)\, dt.
\end{equation}
We have to show $\abs{r_d} \leq \nrm f_{\infty,I} + \nrm {f'}_{1,I}.$ By replacing $k$ by $kd$ and adjusting $l$ we may assume $d=1$, and by replacing $f$ by $f(k\cdot+l)$ and adjusting $x$ and $y$ we may also assume $k=1$.

Now let $x' = x+\floor{ y}$, $I'=[x,x')$ and $I'' = [x',x+y]$. We define
\begin{align*}
 r' &= \sum_{a\in \CA\cap I'} f(a) - \int_{I'} f(t)\, dt, \\
 r'' &= \sum_{a\in \CA\cap I''} f(a) - \int_{I''} f(t)\, dt.
\end{align*}
Then we have $r_1 = r' + r''$, since $I$ is the disjoint union of $I'$ and $I''$.

Now let $a<b$ and let $\xi\in [a,b]$. Then, by choosing an approximating sequence of $C^1(I)$-functions, we can extend the well-known identity
\begin{equation*}\label{e:int-eq}
 \int_a^b g(t)\, dt = (b-a)g(\xi)  + \int_a^\xi g'(t)(a-t)\,dt + \int_\xi^b g'(t) (b-t)\,dt
\end{equation*}
to all $g\in W^{1,1}((a,b))$. We therefore obtain the bound
\begin{equation}\label{e:int-bound}
 \abs{g(\xi) - \frac{1}{b-a}\int_{a}^b g(t)\,dt} \leq \int_a^b \abs{g'(t)}\, dt.
\end{equation}

The interval $I'$  is the disjoint union of half-open intervals $[x+n,x+n+1)$, $n=0,1,\dots, \floor{y}-1$.
 Since each such interval contains exactly one element of $\CA$, we can apply \eqref{e:int-bound} which yields the bound
\begin{equation*}\label{e:r'd}
 \abs{r'} \leq \int_{I'} \abs{f'(t)}\,dt.
\end{equation*}

It remains to estimate $r''$. Since we have $\abs{I''}<1$, the intersection $\CA\cap I''$ is either empty or contains exactly one element $a$. In the first case we have
\begin{equation*}
 \abs{r''} \leq \int_{I''} f(t)\,dt \leq \nrm{f}_{\infty, I''} 
\end{equation*}
and in the second case we have
\begin{align*}
 \abs{r''} &\leq \abs{ \int_{I''} f(t)\, dt -\abs{I''}f(a)} + \left(1-\abs{I''}\right) \abs{f(a)} \\
&\leq  \abs{I''} \int_{I''} \abs{f'(t)}\, dt + \left(1-\abs{I''}\right) \abs{f(a)}\leq \nrm{f'}_{1,I''} + \nrm{f}_{\infty, I''}.
\end{align*}
So in both cases $\abs{r''} \leq \nrm{f'}_{1,I''} + \nrm{f}_{\infty, I''}$ holds. We thus get
\[
\abs{r_1} \leq  \abs{r'} + \abs{r''} \leq \nrm{f'}_{1,I'} + \nrm{f'}_{1,I''} + \nrm{f}_{\infty, I''} \leq  \nrm f_{\infty,I} + \nrm {f'}_{1,I}.
\]
\end{proof}


\begin{prop}[Weighted Selberg Sieve]\label{s:wss}
Let $f\in W^{1,1}(I)$ be non-negative and let $z\geq 1$. We define
\begin{equation*}
 S_k(z) = \sum_{\substack{n\leq z\\(n,k)=1}} \frac{\mu(n)^2}{\varphi(n)}\eand H_k(z) = \sum_{\substack{n\leq z\\ (n,k)=1}} \mu(n)^2 \frac{\sigma(n)}{\varphi(n)}.
\end{equation*}
Then the inequality
\begin{equation*}
 \sum_{\substack{n\in I\cap(l+k\Z)\\(n,P(z,k))=1}} f(n) \leq \frac{\nrm{f}_{1,I}}{kS_k(z)} + (\nrm{f}_{\infty,I} + \nrm{f'}_{1,I}) \frac{H_k(z)^2}{S_k(z)^2}
\end{equation*}
holds.
\end{prop}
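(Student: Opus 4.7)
The plan is to apply the Selberg $\Lambda^2$ sieve directly to the weighted sum. Let $\lambda_d$ be real weights supported on squarefree $d\leq z$ with $(d,k)=1$, normalized by $\lambda_1=1$. For any $n$ with $(n,P(z,k))=1$, the only $d$ in the support of $\lambda$ that divides $n$ is $d=1$, so $\bigl(\sum_{d\mid n}\lambda_d\bigr)^2=1$; for the remaining $n\in\mcA$ this square is non-negative, and combined with $f\geq 0$ this yields the Selberg majorant
\[
 \sum_{\substack{n\in\mcA\\(n,P(z,k))=1}} f(n) \;\leq\; \sum_{n\in\mcA} f(n)\Biggl(\sum_{d\mid n}\lambda_d\Biggr)^{\!2} \;=\; \sum_{d_1,d_2}\lambda_{d_1}\lambda_{d_2}\sum_{n\in\mcA_{[d_1,d_2]}} f(n).
\]
Inserting Lemma~\ref{l:rs} into each inner sum gives $\sum_{n\in\mcA_d} f(n)=\frac{1}{kd}\int_I f + r_d$ with $|r_d|\leq \nrm{f}_{\infty,I}+\nrm{f'}_{1,I}$. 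Since $\nrm{f}_{1,I}=\int_I f$ by non-negativity, the right-hand side splits as a main term $\frac{\nrm{f}_{1,I}}{k}\sum_{d_1,d_2}\lambda_{d_1}\lambda_{d_2}/[d_1,d_2]$ plus a remainder bounded in absolute value by $(\nrm{f}_{\infty,I}+\nrm{f'}_{1,I})\bigl(\sum_d|\lambda_d|\bigr)^{2}$.

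For the main term I would use the classical Selberg diagonalization: with $g$ the multiplicative function satisfying $g(p)=1/(p-1)$ for $p\nmid k$ and $g(p)=0$ for $p\mid k$, set
\[
 \lambda_d=\frac{\mu(d)}{S_k(z)}\sum_{\substack{m\leq z/d\\(m,kd)=1}}\frac{\mu(m)^2}{\varphi(m)}
\]
for squarefree $d\leq z$ coprime to $k$, and $\lambda_d=0$ otherwise. The standard substitution then reduces $\sum_{d_1,d_2}\lambda_{d_1}\lambda_{d_2}/[d_1,d_2]$ to $1/S_k(z)$, producing exactly the main term $\nrm{f}_{1,I}/(k S_k(z))$ of the proposition.

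The remaining step, which I view as the only real calculation, is the bound $\sum_d|\lambda_d|\leq H_k(z)/S_k(z)$. Taking absolute values in the definition of $\lambda_d$, swapping summations, and substituting $n=dm$, the constraints $d\leq z$ and $m\leq z/d$ collapse to the single condition $n\leq z$ (the squarefreeness and coprimality of $n$ being automatic), so
\[
 \sum_d|\lambda_d|\leq\frac{1}{S_k(z)}\sum_{\substack{n\leq z\\(n,k)=1}}\mu(n)^2\sum_{e\mid n}\frac{1}{\varphi(e)}=\frac{1}{S_k(z)}\sum_{\substack{n\leq z\\(n,k)=1}}\mu(n)^2\frac{n}{\varphi(n)},
\]
using $\sum_{e\mid n}1/\varphi(e)=\prod_{p\mid n}(1+\tfrac{1}{p-1})=n/\varphi(n)$ for squarefree $n$. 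Since $n/\varphi(n)\leq \sigma(n)/\varphi(n)$ term by term, the bound $\sum_d|\lambda_d|\leq H_k(z)/S_k(z)$ follows, and squaring gives the remainder coefficient $H_k(z)^2/S_k(z)^2$ asserted in the proposition. The main obstacle is really only in this last identification; the Selberg diagonalization and the use of Lemma~\ref{l:rs} are routine once set up.
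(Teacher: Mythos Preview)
Your strategy is exactly the paper's: Selberg's $\Lambda^2$ majorant, Lemma~\ref{l:rs} for each inner sum, then the classical minimizing weights, with the two resulting identities quoted from \cite{Lint1965}. There is, however, a slip in your explicit formula for $\lambda_d$: it is missing the factor $d/\varphi(d)$. The optimal Selberg weights here are
\[
\lambda_d \;=\; \frac{\mu(d)\,d}{\varphi(d)\,S_k(z)}\sum_{\substack{m\leq z/d\\(m,kd)=1}}\frac{\mu(m)^2}{\varphi(m)}
\;=\;\frac{\mu(d)\,d}{S_k(z)}\sum_{\substack{m\in\CN_k\\ d\mid m}}\frac{1}{\varphi(m)} .
\]
With your weights the quadratic form $\sum \lambda_{d_1}\lambda_{d_2}/[d_1,d_2]$ is strictly larger than $1/S_k(z)$; already when the support is $\{1,p\}$ one gets $(p^3-p^2+1)/p^3$ instead of $(p-1)/p$. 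So the main-term assertion fails as written.

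The same slip explains the extra inequality at the end. With the corrected $\lambda_d$, your swap-and-substitute computation yields
\[
\sum_d|\lambda_d|\;=\;\frac{1}{S_k(z)}\sum_{\substack{n\leq z\\(n,k)=1}}\frac{\mu(n)^2}{\varphi(n)}\sum_{d\mid n} d\;=\;\frac{H_k(z)}{S_k(z)}
\]
on the nose, and the detour through $n/\varphi(n)\leq\sigma(n)/\varphi(n)$ becomes unnecessary. Once the missing factor is restored, your argument is complete and coincides with the paper's.
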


\begin{proof}
The proof is based on Selberg's lambda squared method \cite{Selberg1947}. Let $r_d$ be as in \eqref{e:rd-def} and let 
\[\CN_k = \lset n\in \N\mid 1\leq n\leq z, \mu(n)\neq 0, (n,k) = 1\rset.\]
Let $(\lambda_n)_{n\in\CN_k}$ be a sequence of real numbers satisfying $\lambda_1 = 1$. Then we have
\begin{equation}\label{e:selberg}
\sum_{a\in\CA} f(a) \Bigl(\sum_{\substack{n\in\CN_k\\ n\mid a}} \lambda_n\Bigr)^2 
\geq \sum_{\substack{a\in \CA\\(a,P(z,k))=1}} f(a)\Bigl(\sum_{\substack{n\in\CN_k\\ n\mid a}} \lambda_n\Bigr)^2 =  \sum_{\substack{a\in \CA\\(a,P(z,k))=1}} f(a).
\end{equation}
By a simple calculation we get
\begin{align}
 \sum_{a\in\CA} f(a) \Bigl(\sum_{\substack{n\in\CN_k \\ n\mid a}} \lambda_n\Bigr)^2 
&= \sum_{(n_1,n_2)\in\CN_k^2} \lambda_{n_1}\lambda_{n_2} \sum_{a\in\CA_{[n_1,n_2]}} f(a)\notag\\
&= \frac{\nrm{f}_{1,I}}{k} \,\,\sum_{(n_1,n_2)\in\CN_k^2} \frac{\lambda_{n_1}\lambda_{n_2}}{[n_1,n_2]}
 + \!\! \sum_{(n_1,n_2)\in\CN_k^2} \lambda_{n_1}\lambda_{n_2} r_{[n_1,n_2]}.\label{e:qform+r}
\end{align}
Now \eqref{e:selberg}, \eqref{e:qform+r} and Lemma \ref{l:rs} together imply
\begin{equation*}
  \sum_{\substack{n\in I\cap(l+k\Z)\\(n,P(z,k))=1}} f(n) \leq \frac{\nrm{f}_{1,I}}{k} \sum_{(n_1,n_2)\in\CN_k^2} \frac{\lambda_{n_1}\lambda_{n_2}}{[n_1,n_2]} + (\nrm f_{\infty,I} + \nrm f' _1) \Bigl(\sum_{n\in\CN_k} \abs{\lambda_n}\Bigr)^2.
\end{equation*}
Here we choose the well-known minimizing sequence
\begin{equation*}
 \lambda_n = \frac{\mu(n)n}{S_k(z)} \sum_{\substack{m\in\CN_k\\n|m}} \frac{1}{\varphi(n)}
\end{equation*}
for the quadratic form in \eqref{e:qform+r}, which gives
\begin{equation*}
\sum_{(n_1,n_2)\in\CN_k^2}\frac{\lambda_{n_1}\lambda_{n_2}}{[n_1,n_2]} =  \frac{1}{S_k(z)}
\eand
\sum_{n\in\CN_k}\abs{\lambda_n} = \frac{H_k(z)}{S_k(z)}
\end{equation*}
(see \cite{Lint1965}).
\end{proof}


\begin{prop}[Weighted Eratosthenes Sieve]\label{s:wes}
 Let $f\in W^{1,1}(I)$ be non-negative and let $z\geq 1$. Then we have
\begin{equation*}
  \sum_{\substack{n\in I\cap(l+k\Z)\\(n,P(z,k))=1}} f(n) \leq \frac{\nrm{f}_{1,I}}{k}\prod_{\substack{p\leq z\\p\nmid k}} \left(1-\frac{1}{p}\right) + (\nrm{f}_{\infty,I} + \nrm{f'}_{1,I})\, 2^{\pi(z)}.
\end{equation*}
\end{prop}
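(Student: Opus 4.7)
The plan is to reduce the problem to a Legendre/Möbius style inclusion-exclusion and then apply Lemma \ref{l:rs} term by term. Concretely, using the identity $\sum_{d\mid(a,P(z,k))}\mu(d)=\mathbf{1}_{(a,P(z,k))=1}$, I would rewrite
\[
\sum_{\substack{a\in\CA\\(a,P(z,k))=1}} f(a)
= \sum_{d\mid P(z,k)} \mu(d) \sum_{a\in \CA_d} f(a).
\]
Every divisor $d$ of $P(z,k)$ is squarefree and satisfies $(d,k)=1$ by construction, so Lemma \ref{l:rs} applies and gives $\sum_{a\in\CA_d} f(a) = \frac{1}{kd}\int_I f(t)\,dt + r_d$ with $\abs{r_d}\leq \nrm{f}_{\infty,I}+\nrm{f'}_{1,I}$.

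The main term then becomes
\[
\frac{1}{k}\Bigl(\int_I f(t)\,dt\Bigr)\sum_{d\mid P(z,k)}\frac{\mu(d)}{d}
= \frac{\nrm{f}_{1,I}}{k}\prod_{\substack{p\leq z\\ p\nmid k}}\left(1-\frac{1}{p}\right),
\]
where I use $f\geq 0$ to replace $\int_I f$ by $\nrm{f}_{1,I}$, and the Euler product expansion of the Möbius sum over divisors of the squarefree integer $P(z,k)$.

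For the error term I would bound trivially:
\[
\Biggl|\sum_{d\mid P(z,k)}\mu(d)\,r_d\Biggr|
\leq \sum_{\substack{d\mid P(z,k)\\ \mu(d)\neq 0}} \abs{r_d}
\leq \bigl(\nrm{f}_{\infty,I}+\nrm{f'}_{1,I}\bigr)\,\tau\bigl(P(z,k)\bigr),
\]
and observe that $\tau(P(z,k))=2^{\omega(P(z,k))} = 2^{\#\{p\leq z\,:\,p\nmid k\}}\leq 2^{\pi(z)}$. Adding the two contributions yields the claimed inequality.

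There is no real obstacle here once Lemma \ref{l:rs} is available; the only points that require a bit of care are verifying $(d,k)=1$ before invoking the lemma (which is automatic from $d\mid P(z,k)$), using non-negativity of $f$ to identify $\int_I f$ with $\nrm{f}_{1,I}$, and the straightforward divisor count $2^{\pi(z)}$ for the error.
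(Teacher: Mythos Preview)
Your proof is correct and follows essentially the same route as the paper: M\"obius inversion over divisors of $P(z,k)$, interchange of summation, application of Lemma~\ref{l:rs} to each $\CA_d$, Euler-product evaluation of the main term, and the trivial bound $\tau(P(z,k))\leq 2^{\pi(z)}$ on the error. The care points you flag (that $d\mid P(z,k)$ forces $(d,k)=1$, and that $f\geq 0$ gives $\int_I f=\nrm{f}_{1,I}$) are exactly the ones implicit in the paper's argument.
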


\begin{proof}
 We have
\begin{align*}
\sum_{\substack{n\in I\cap(l+k\Z)\\(n,P(z,k))=1}} f(n) 
  &= \sum_{a\in\CA} f(a) \sum_{d\mid (a,P(z,k))} \mu(d)  \notag \\
  &= \sum_{d\mid P(z,k)} \mu(d) \sum_{a\in \CA_d} f(a) \notag \\
  &= \frac{\nrm f_{1,I}}{k}\sum_{d\mid P(z,k)}\frac{\mu(d)}{d} + \sum_{ d\mid P(z,k)} \mu(d) r_d \notag \\
  &\leq \frac{\nrm f_{1,I}}{k} \prod_{p | P(z,k)} \left(1-\frac{1}{p}\right) + (\nrm f_{\infty,I} + \nrm {f'}_{1,I})\, 2^{\pi(z)}.\qedhere
\end{align*}
\end{proof}

\section{The weighted Brun-Titchmarsh Inequality}

We first prove a general version of the weighted Brun-Titchmarsh inequality based on the considerations in \cite{Lint1965}. We then give a stronger result for the case $k=1$, which is of special interest for the prime counting function. The first result implies the Brun-Titchmarsh inequality as stated in \cite{Lint1965}, and the second result implies the stronger version in \cite{MV1973}.

\begin{defin}
We define the functional $\rho_I\colon W^{1,1}(I)\rightarrow [0,\infty)$ by
\begin{equation*}
 \rho_I(f) =      \frac{\nrm{f}_{1,I}}{\nrm{f}_{\infty,I} + \nrm{f'}_{1,I}}
\end{equation*}
for $f\in W^{1,1}(I)\setminus\{0\}$ and $\rho_I(0) = 0$.
\end{defin}

\begin{thm}\label{t:wbti1}
 Let $I=[x,x+y]\subset[0,\infty)$, and let $l,k\in\N$ be coprime numbers. Then the inequalities
\begin{equation*}\label{e:wbti2}
 \sum_{\substack{p\in I\\ p\equiv l \bmod k}} f(p) < 2 \frac{\nrm{f}_{1,I}}{\varphi(k)\log (\rho_I(f)/k)}\left( 1+ \frac{8}{\log (\rho_I(f)/k)} \right)
\end{equation*}
and
\begin{equation*}\label{e:wbti3}
 \sum_{\substack{p\in I\\ p\equiv l \bmod k}} f(p) < 3 \frac{\nrm{f}_{1,I}}{\varphi(k)\log (\rho_I(f)/k)} 
\end{equation*}
hold for all non-negative $f\in W^{1,1}(I)$ satisfying $\rho_I(f)>k$.
\end{thm}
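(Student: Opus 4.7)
The plan is to apply the Weighted Selberg Sieve (Proposition \ref{s:wss}) and then optimize the sieve parameter $z$. First I would write
\[
\sum_{\substack{p\in I\\p\equiv l\bmod k}} f(p) = \sum_{\substack{p\in I,\,p>z\\p\equiv l\bmod k}} f(p) + \sum_{\substack{p\in I,\,p\leq z\\p\equiv l\bmod k}} f(p).
\]
Since $(l,k)=1$, any prime $p>z$ in the progression is coprime to $P(z,k)$, so Proposition \ref{s:wss} bounds the first sum by
\[
\frac{\nrm{f}_{1,I}}{kS_k(z)} + (\nrm{f}_{\infty,I}+\nrm{f'}_{1,I})\,\frac{H_k(z)^2}{S_k(z)^2}.
\]
The second sum involves at most $O(z/k + 1)$ primes and is controlled trivially by $\nrm{f}_{\infty,I}$.

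Next I would insert the classical arithmetic estimates from \cite{Lint1965}, namely the lower bound $kS_k(z)\geq \varphi(k)(\log z + c_k)$ and the upper bound $H_k(z)/S_k(z) \leq C\, z\, k/\varphi(k)$ with explicit constants $c_k,C$. Substituting these yields a bound of the form
\[
\sum_{\substack{p\in I\\p\equiv l\bmod k}} f(p) \leq \frac{\nrm{f}_{1,I}}{\varphi(k)(\log z + c_k)} + (\nrm{f}_{\infty,I}+\nrm{f'}_{1,I})\,\frac{C^2\, z^2 \, k}{\varphi(k)(\log z + c_k)^2} + \text{(small)}.
\]
The crucial step is then the optimization: choose $z$ so that $z^2 \asymp \rho_I(f)/k$, which makes the remainder term absorb into the main term and produces $2\log z \asymp \log(\rho_I(f)/k)$. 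This is where the constant $2$ in the inequality arises, and the lower-order slack gives the correction $1+8/\log(\rho_I(f)/k)$.

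For the second inequality with constant $3$, I would argue by cases. When $\log(\rho_I(f)/k)$ is sufficiently large, the factor $2(1+8/\log(\rho_I(f)/k))$ is bounded by $3$, so the bound follows from the first inequality. For the remaining range where $\rho_I(f)/k$ is close to $k$, the right-hand side $3\nrm{f}_{1,I}/(\varphi(k)\log(\rho_I(f)/k))$ is so large that a crude bound via $\nrm{f}_{1,I}\leq y\nrm{f}_{\infty,I}$ together with a count of integers in the progression (or a direct application of the Weighted Eratosthenes Sieve from Proposition \ref{s:wes}) suffices.

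The main obstacle will be bookkeeping the explicit constants: translating the $\rho_I(f)$-dependence into exactly the factor $1 + 8/\log(\rho_I(f)/k)$ and ensuring that the finitely many small primes $p \leq z$ split off at the start contribute only to this lower-order correction. Matching these constants to those of van Lint--Richert, and verifying that the case split in the second inequality leaves no gap near the boundary $\rho_I(f)=k$, is the delicate part.
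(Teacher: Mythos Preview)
Your proposal is correct and follows essentially the same route as the paper: split off primes $p\le z$, apply the Weighted Selberg Sieve (Proposition~\ref{s:wss}) to the rest, feed in the van Lint--Richert estimates for $S_k$ and $H_k$, and optimize $z$.

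The paper's execution is cleaner in one respect that eliminates precisely the ``bookkeeping'' you flag as the obstacle. Rather than carry $\nrm{f}_{1,I}$ and $\nrm{f}_{\infty,I}+\nrm{f'}_{1,I}$ separately through the argument, the paper first normalizes $g=f/(\nrm{f}_{\infty,I}+\nrm{f'}_{1,I})$, so that $\nrm{g}_{1,I}=\rho_I(f)=:Y$ and $\nrm{g}_{\infty,I}+\nrm{g'}_{1,I}=1$. After this, the sieve bound reads
\[
\sum_{\substack{p\in I\\p\equiv l\bmod k}} g(p)\le \frac{Y}{\varphi(k)S_1(z)}+\frac{H_{k_1}(z)^2}{S_{k_1}(z)^2}+\pi(z,k_1,l_1)+1,
\]
which is \emph{verbatim} the expression occurring in \cite{Lint1965} for the classical Brun--Titchmarsh inequality, with $Y$ in the role of the interval length. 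The paper can then simply cite \cite{Lint1965} for the choice of $z$ and the verification of both constants $2(1+8/\log(Y/k))$ and $3$, with no new computation. (The paper also passes to $k_1=[k,2]$ and the odd residue class $l_1$, a device from \cite{Lint1965} that you do not mention; note $\varphi(k_1)=\varphi(k)$.) Your outline would reproduce the van Lint--Richert calculation instead of citing it, which is fine but more work. A minor slip: in your case split for the second inequality you wrote ``$\rho_I(f)/k$ is close to $k$'' where you mean close to~$1$.
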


\begin{proof}
We reduce the proof to a situation in the proof of the ordinary Brun-Titchmarsh inequality in \cite{Lint1965}.

Let $g = f/(\nrm{f}_{\infty,I}+\nrm{f'}_{1,I})$. Then we have $\rho_I(g) = \rho_I(f)$ and $\nrm{g}_{1,I} = \rho_I(f)$. We define $Y = \rho_I(f)$. It then suffices to prove the inequalities
\begin{equation}\label{e:wbtu-pb1}
\sum_{\substack{p\in I\\p\equiv l\bmod k}} g(p) < 2 \frac{Y}{\varphi(k)\log(Y/k)}\left( 1 + \frac{8}{\log(Y/k)}\right) 
\end{equation}
and
\begin{equation}\label{e:wbtu-pb2}
\sum_{\substack{p\in I\\p\equiv l\bmod k}} g(p) < 3 \frac{Y}{\varphi(k)\log(Y/k)}.
\end{equation}

Let $k_1=[k,2]$ and let $l_1$ be a representative of the odd pre-image (i.e. the pre-image containing only odd representatives) of the residue class of $l$ under the projection $\Z/k_1\Z \rightarrow \Z/k\Z$. Then, since $\nrm{g}_{\infty,I} \leq 1$, we have
\begin{equation*}
 \sum_{\substack{p\in I\\p\equiv l\bmod k}} g(p) \leq  \sum_{\substack{p\in I \\p\equiv l_1\bmod k_1}} g(p) + 1.
\end{equation*}
Now Proposition \ref{s:wss} gives the bound
\begin{equation}\label{e:t2-start}
\sum_{\substack{p\in I\\p\equiv l\bmod k}} g(p) \leq \frac{Y}{\varphi(k) S_1(z)} + \frac{H_{k_1}^2(z)}{S_{k_1}^2(z)} + \pi(z,k_1,l_1) + 1.
\end{equation}
We are now in the situation, where we have to show that the right hand side of \eqref{e:t2-start} is bounded by either \eqref{e:wbtu-pb1} or \eqref{e:wbtu-pb2} for a suitable choice of $z$. But this is carried out in \cite{Lint1965}.
\end{proof}

From Theorem \ref{t:wbti1} one recovers the ordinary Brun-Titchmarsh inequality in the form 
\[
\pi(x+y;k,l) - \pi(x;k,l) < \frac{2 y}{\varphi(k)\log(y/k)}\left(1+ \frac{8}{\log(y/k)}\right)
\]
by taking $f\equiv 1$ on $[x,x+y]\subset[0,\infty)$, which is slightly weaker than the strongest version proved in \cite{MV1973}.

\begin{thm}\label{t:wbti2}
Let $I=[x,x+y]\subset[0,\infty)$. Then we have
\begin{equation*}\label{e:wbti1}
 \sum_{p\in I} f(p) < 2 \frac{\nrm{f}_{1,I}}{\log (\rho_I(f))} 
\end{equation*}
for all non-negative $f\in W^{1,1}(I)$ satisfying $\rho_I(f)>1$.
\end{thm}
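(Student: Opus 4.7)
The plan is to adapt the reduction used in the proof of Theorem~\ref{t:wbti1} to the case $k=1$, where the parity trick (replacing $k$ by $k_1=[k,2]$) becomes unnecessary. Its absence is precisely what permits the sharp leading constant $2$, paralleling the improvement of the classical Brun--Titchmarsh inequality in \cite{MV1973} over \cite{Lint1965}.

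First I would normalize by setting $g = f/(\nrm{f}_{\infty,I} + \nrm{f'}_{1,I})$, so that $\nrm{g}_{\infty,I} + \nrm{g'}_{1,I} \le 1$ and $\nrm{g}_{1,I} = \rho_I(g) = \rho_I(f)$. Writing $Y = \rho_I(f) > 1$, it then suffices to prove $\sum_{p\in I} g(p) < 2Y/\log Y$. Splitting off the primes $p \le z$ trivially and applying Proposition~\ref{s:wss} to the integers coprime to $P(z,1)$ yields
\[
\sum_{p \in I} g(p) \le \pi(z)\,\nrm{g}_{\infty,I} + \sum_{\substack{n\in I\\(n,P(z,1))=1}} g(n) \le \pi(z) + \frac{Y}{S_1(z)} + \frac{H_1(z)^2}{S_1(z)^2}.
\]

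For the final step I would choose $z$ slightly below $Y^{1/2}$ and verify that the right-hand side is strictly less than $2Y/\log Y$. The classical effective estimates $S_1(z) \ge \log z + \gamma + \sum_p (\log p)/(p(p-1))$, $H_1(z) = O(z)$, and sharp upper bounds for $\pi(z)$ feed directly into the optimization already performed in \cite{MV1973}, with $Y$ in the role of the interval length and the factor $\nrm{g}_{\infty,I} + \nrm{g'}_{1,I} \le 1$ absorbing the remainder term of the sieve. The two settings are formally identical once this substitution is made.

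The main obstacle is preserving the strict inequality with the sharp constant $2$: this constant is tight for Selberg's sieve in this regime, so the argument must rely on the refined effective forms of the estimates rather than their leading-order asymptotics. Once $z$ is chosen, however, the verification reduces to the computation in \cite{MV1973}, adapted in a cosmetic way to the weighted context.
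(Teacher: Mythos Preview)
Your reduction is exactly the one the paper uses: normalize to $g$, set $Y=\rho_I(f)$, split off primes $p\le z$, apply the weighted Selberg sieve (Proposition~\ref{s:wss}), and arrive at
\[
\sum_{p\in I} g(p) \le \frac{Y}{S_1(z)} + \frac{H_1(z)^2}{S_1(z)^2} + \pi(z),
\]
which is formally the same quantity as in the unweighted problem with $Y$ playing the role of the interval length. The paper also takes $z=\sqrt{Y}/2$.

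Where you understate the work is in the final step. The paper does \emph{not} simply defer to \cite{MV1973}: it splits into three ranges of $Y$. For $Y>4\times 10^{18}$ it invokes its own explicit Lemmas~\ref{l:H-asymp} and~\ref{l:S-asymp} for $H_1(z)$ and $S_1(z)$ (the latter only valid for $z\ge 10^9$) and checks the inequality by hand. For $20000\le Y\le 4\times 10^{18}$ it runs a numerical verification of the Selberg bound. Only for $1<Y<20000$ does it cite \cite[p.~130]{MV1973}, and there the relevant bound is the weighted \emph{Eratosthenes} sieve (Proposition~\ref{s:wes}), not Selberg. So your sketch is structurally correct, but the claim that the remaining verification is ``cosmetic'' and already contained in \cite{MV1973} is too optimistic; the sharp constant~$2$ uniformly for all $Y>1$ requires the paper's own effective lemmas, a case split, and a computer check, with the Eratosthenes sieve handling the small-$Y$ range.
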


The proof is based on sharper estimates for $H_1$ and $S_1$, provided by the following two lemmas. The second lemma is an explicit version of a result of Ward \cite{Ward1927}.

\begin{lem}\label{l:H-asymp}
We have
\begin{equation*}
 H_1(z) = \frac{15}{\pi^2} \,z + \Theta(47 \sqrt{z})
\end{equation*}
for all $z\geq 1$.
\end{lem}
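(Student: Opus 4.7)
The plan is to apply M\"obius inversion to $h(n) := \mu(n)^2\sigma(n)/\varphi(n)$ and then evaluate the resulting convolution sum. Writing $h = 1 \ast g$, the multiplicative function $g$ has $g(p) = 2/(p-1)$, $g(p^2) = -(p+1)/(p-1)$, and $g(p^k) = 0$ for $k \geq 3$, so $g$ is supported on integers of the form $n = ab^2$ with $a,b$ coprime and squarefree. Swapping summations,
\[
 H_1(z) = \sum_{d \leq z} g(d)\lfloor z/d \rfloor = z \sum_{d=1}^\infty \frac{g(d)}{d} - z\sum_{d > z}\frac{g(d)}{d} - \sum_{d \leq z} g(d)\{z/d\}.
\]

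The main term is evaluated via its Euler product: the elementary identity
\[
 1 + \frac{2}{(p-1)p} - \frac{p+1}{(p-1)p^2} = \frac{(p^2+1)(p-1)}{(p-1)p^2} = \frac{p^2+1}{p^2}
\]
collapses the product to $\prod_p(1 + 1/p^2) = \zeta(2)/\zeta(4) = 15/\pi^2$, the desired leading constant.

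It remains to bound the two remainder terms by $47\sqrt z$ in total. For $n = ab^2$ with $(a,b)=1$ and $a,b$ squarefree one has $|g(n)| = 2^{\omega(a)}\sigma(b)/(\varphi(a)\varphi(b))$, and the Dirichlet series $\sum_n |g(n)|/n^s$ differs from a $\zeta(2s)$-type singular factor by an Euler product that is absolutely convergent for $\mathrm{Re}(s) > 0$; its abscissa of convergence is therefore exactly $1/2$. I would split each remainder sum along the factorization $n = ab^2$, estimate the inner sum over $a \leq z/b^2$ with $(a,b)=1$ (a weighted sum of $\mu^2(a)2^{\omega(a)}/\varphi(a)$) by an explicit Mertens-type bound, and handle the outer sum in $b \leq \sqrt z$ by comparison with its convergent completion $\sum_b \mu^2(b)\sigma(b)/(\varphi(b)b^2)$.

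The main obstacle is the explicit constant. The qualitative $O(\sqrt z)$ bound is automatic from the location of the convergence abscissa at $s=1/2$, but reaching the specific value $47$ requires carrying effective forms of Mertens' estimates through both the tail $z\sum_{d>z}|g(d)|/d$ and the local contribution $\sum_{d \leq z}|g(d)|\{z/d\}$, and then optimizing the split between the two.
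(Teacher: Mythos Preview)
Your decomposition $h = 1 \ast g$ differs from the paper's: there one writes $\mu(n)^2\sigma(n)/\varphi(n) = (\mu^2 \ast c)(n)$, where $c$ has Dirichlet series $h(s)=\prod_p\bigl(1+\tfrac{2}{(1+p^s)(p-1)}\bigr)$. The inner sum is then $Q(z/d)=\sum_{m\le z/d}\mu(m)^2$, for which the paper quotes the explicit bound $Q(w)=\tfrac{6}{\pi^2}w+\Theta(0.68\sqrt{w})$; the whole error collapses to $1.3\sqrt{z}\,\tilde h(1/2)$, and $\tilde h(1/2)=\sum_n|c_n|n^{-1/2}\le 36$ is a \emph{convergent} Euler product evaluated numerically. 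The leading constant $\tfrac{6}{\pi^2}h(1)=\tfrac{6}{\pi^2}\cdot\tfrac{5}{2}=\tfrac{15}{\pi^2}$ agrees with yours.

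The decisive difference is that your $\sum_n|g(n)|n^{-s}$ has Euler factor $1+\tfrac{2}{(p-1)p^s}+\tfrac{p+1}{(p-1)}p^{-2s}$, whose last term is $\sim p^{-2s}$, so the series \emph{diverges} at $s=1/2$; you cannot simply insert a finite value there the way the paper does with $\tilde h(1/2)$. (Relatedly, an abscissa at $1/2$ does not by itself force $O(\sqrt z)$; here it holds only because the singularity is the simple pole of a $\zeta(2s)$ factor.) Your plan to bound $\sum_{d\le z}|g(d)|\{z/d\}\le\sum_{d\le z}|g(d)|$ via the split $d=ab^2$ then runs into the fact that the outer sum over $b\le\sqrt z$ is $\sum_{b\le\sqrt z}\mu(b)^2\sigma(b)/\varphi(b)=H_1(\sqrt z)$ itself, with no $b^{-2}$ weight available to compare against the convergent series you mention; this is circular without an auxiliary crude bound on $H_1$, and the inner Mertens-type sum over $a$ contributes an extra $\log^2$ factor on top. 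The route can be pushed through, but reaching the constant $47$ this way is considerably more laborious. The paper's choice of $\mu^2$ rather than $1$ as convolution partner is precisely what pushes the abscissa of the cofactor strictly below $1/2$ and lets the explicit constant fall out in a few lines.
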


\begin{proof}
We will be needing the following well-known identities for the Riemann zeta function (see e. g. \cite{Titchmarsh1951}):

\begin{gather}
 \zeta(2) = \frac{\pi^2}{6}, \;\;\; \zeta(4) = \frac{\pi^4}{90}, \quad \frac{\zeta(s)}{\zeta(2s)} = \prod_{p} (1+p^{-s}) = \sum_{n} \frac{\mu(n)^2}{n^s},\quad \notag\\
 \sum_{n} \frac{2^{\omega(n)}}{n^s} = \frac{\zeta(s)^2}{\zeta(2s)} \eand \frac{1}{\zeta(s)} = \sum_{n} \frac{\mu(n)}{n^s}.\label{e:zeta-identities}
\end{gather}
We define
\begin{align*}
 h(s) =& \prod_p \left(1 + \frac{2}{(1+p^{s})(p-1)}\right),\\
 \tilde h(s) =& \prod_p \left(1 + \frac{2}{(p^{s}-1)(p-1)}\right).
\end{align*}
In $\Re(s)>0$ these products converge normally and we have
\begin{equation*}
h(s) = \sum_n c_n n^{-s} \eand \tilde h(s) = \sum_n \abs{c_n} n^{-s}
\end{equation*}
for suitable $c_n$. From \eqref{e:zeta-identities} we get
\begin{align*}
 \frac{\zeta(s)}{\zeta(2s)}h(s) 
 &= \prod_p (1+p^{-s})  \prod_p \left(1 + \frac{2}{(1+p^{s})(p-1)}\right) \\
 &= \prod_p\left(1+p^{-s}\frac{p+1}{p-1}\right) = \sum_n \frac{\mu(n)^2\sigma(n)}{\varphi(n)} n^{-s},
\end{align*}
which gives
\begin{equation}\label{e:Rz-id1}
 \sum_{n\leq z} \frac{\mu(n)^2\sigma(n)}{\varphi(n)} = \sum_{d\leq z} c_d \sum_{m\leq z/d} \mu(m)^2.
\end{equation}

Combining \cite[Theorem 3]{CDE07} with a short computar calculation, we obtain the bound
\begin{equation}\label{e:Q-bound}
Q(z) :=  \sum_{n\leq z} \mu(n)^2 = \frac{6}{\pi^2} z + \Theta(0.68 \sqrt{z})
\end{equation}
for $z\geq 1$.

This and \eqref{e:Rz-id1} together imply
\begin{align}
 H_1(z) &= \sum_{d\leq z} c_d\left(\frac{6z}{\pi^2 d} + \Theta\left(0.68 \sqrt{z/d}\right)\right) \notag\\
&= \frac{6z}{\pi^2}\sum_d \frac{c_d}{d} + \Theta\left(0.68 \sqrt{z}\sum_d \frac{\abs{c_d}}{\sqrt{d}} + \frac{6z}{\pi^2}\sum_{d>z}\frac{\abs{c_d}}{d}\right)\notag\\
&= \frac{6}{\pi^2}h(1) z + \Theta(1.3\cdot \tilde h(1/2)\sqrt{z}).\label{e:R-bound1}
\end{align}

The value $h(1)$ is given by
\begin{equation*}
 h(1) = \prod_p \left(1 + \frac{2}{p^2-1}\right) = \sum_n \frac{2^{\omega(n)}}{n^2} = \frac{\zeta(2)^2}{\zeta(4)} = \frac{5}{2}.
\end{equation*}

Next we estimate $\tilde h(1/2)$. For $t\geq 16$ the inequality $(\sqrt{t}-1)(t-1)\geq \frac23 t^{3/2}$ holds and we therefore have
\begin{equation}\label{e:h-half-bound}
 \tilde h(1/2) \leq \left(\prod_{p<10000} \frac{1+2(\sqrt{p}-1)^{-1}(p-1)^{-1}}{(1+p^{-3/2})^3} \right) 
 \frac{\zeta(3/2)^3}{\zeta(3)^3}.
\end{equation}

Here, the product on the right hand side is bounded by $3.5$ and  we have $\zeta(3/2)^3/\zeta(3)^3\leq 10.27$. Therefore, we obtain the bound $\tilde h(1/2) \leq 36$. Inserting this bound in \eqref{e:R-bound1} yields the assertion.

\end{proof}

\begin{lem}\label{l:S-asymp}
 Let $z\geq 10^9$. Then we have
\begin{equation}\label{e:S-asymp}
S_1(z) = \log(z) + C_0 + \sum_p \frac{\log(p)}{p(p-1)} + \Theta\left(\frac{58}{\sqrt{z}}\right).
\end{equation}
\end{lem}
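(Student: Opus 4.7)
The approach is the classical Dirichlet-convolution argument: factoring the generating function of $\mu^2/\varphi$ through $\zeta(s+1)$ reduces $S_1(z)$ to partial sums of $\sum 1/m$, whose asymptotic $\log x + C_0 + O(1/x)$ is well known.

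I would start from the Euler product
\[
\sum_{n\geq 1}\frac{\mu(n)^2}{\varphi(n)}\,n^{-s} = \prod_p\left(1+\frac{1}{(p-1)p^s}\right)
\]
and pull out the pole at $s=0$ by writing this as $\zeta(s+1)\,G(s)$, where a short computation gives $G(s) = \prod_p E_p(s)$ with
\[
E_p(s) = 1 + \frac{1}{(p-1)p^{s+1}} - \frac{1}{(p-1)p^{2s+1}}.
\]
This Euler product converges absolutely in $\Re(s) > -1/2$, so that $G(s) = \sum_d g(d)\,d^{-s}$ for a multiplicative function $g$ supported on integers whose prime factors each occur to exponent $1$ or $2$, with $g(p) = 1/(p(p-1))$ and $g(p^2) = -g(p)$. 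The induced convolution identity $\mu(n)^2/\varphi(n) = \sum_{dm=n} g(d)/m$, combined with an interchange of summation, yields
\[
S_1(z) = \sum_{d\leq z} g(d)\,H(z/d), \qquad H(x) := \sum_{m\leq x}\frac{1}{m}.
\]

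Inserting $H(x) = \log x + C_0 + e(x)$ with $|e(x)|\leq 1/(2x)$, extending the sum $\sum_d g(d)(\log(z/d) + C_0)$ to all $d\geq 1$, and using $G(0) = 1$ together with
\[
\frac{G'(0)}{G(0)} = \sum_p \frac{E'_p(0)}{E_p(0)} = \sum_p \frac{\log p}{p(p-1)}
\]
recovers the stated main term. What remain are the tail $\sum_{d>z}g(d)\bigl(\log(z/d)+C_0\bigr)$ and the error $\sum_{d\leq z}g(d)\,e(z/d)$. To produce the $z^{-1/2}$ shape I would use the explicit formula $|g(d)| = 1/(r\varphi(r))$ where $r = \operatorname{rad}(d)$: for each squarefree $r$ there are exactly $2^{\omega(r)}$ integers $d$ with radical $r$, and they satisfy $r\leq d\leq r^2$. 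Both error sums therefore reduce to tails of the convergent series $\sum_r 2^{\omega(r)}/(r\varphi(r))$, weighted by slowly growing $\log$-factors, and the square-root saving comes from the threshold $r > \sqrt{z}$ below which no $d > z$ with $\operatorname{rad}(d) = r$ exists.

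The main obstacle is purely quantitative: producing the explicit constant $58$ under the hypothesis $z\geq 10^9$. This demands tight numerical bounds on several Euler products over primes, which I would obtain along the lines of the proof of Lemma~\ref{l:H-asymp}, by computing the contribution of small primes exactly and dominating the tail via suitable $\zeta$-ratios. Careful bookkeeping is needed so that the logarithmic factors $\log z \leq \log(10^9)\approx 20.7$ appearing in the tail estimates are absorbed into $58$ thanks to the hypothesis $z\geq 10^9$.
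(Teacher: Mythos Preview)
Your plan is correct in outline but takes a different factorization from the paper. The paper does \emph{not} pull out $\zeta(s+1)$; instead it writes
\[
\sum_n \frac{n\,\mu(n)^2}{\varphi(n)}\,n^{-s} \;=\; \frac{\zeta(s)}{\zeta(2s)}\,h(s),
\qquad h(s)=\prod_p\Bigl(1+\frac{1}{(1+p^{s})(p-1)}\Bigr),
\]
so that $S_1(z)=\sum_{d\le z}\frac{c_d}{d}\sum_{m\le z/d}\frac{\mu(m)^2}{m}$. The inner sum is handled by partial summation from the squarefree count $Q(x)=6x/\pi^2+\Theta(0.68\sqrt{x})$ already used in Lemma~\ref{l:H-asymp}, yielding $\zeta(2)^{-1}\log x+A+\Theta(2.04/\sqrt{x})$. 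The $z^{-1/2}$ in the final error thus arises directly from the squarefree error term, and the remaining work is to bound $\tilde h(1/2)$ and $\tilde h(3/8)$ numerically (the latter together with $\log z\le 1.56\,z^{1/8}$ to absorb the logarithm in the tail). This route makes the source of the square-root saving completely transparent and keeps the bookkeeping light.

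Your factorization through $\zeta(s+1)$ is legitimate and has the pleasant feature that the inner asymptotic $H(x)=\log x+C_0+O(1/x)$ is elementary---no squarefree counting needed. But one point in your sketch is not quite right: the second error $\sum_{d\le z}g(d)\,e(z/d)$ does \emph{not} reduce to a tail of $\sum_r 2^{\omega(r)}/(r\varphi(r))$. Bounding $|e(z/d)|\le c\,d/z$ leaves $\frac{c}{z}\sum_{d\le z}|g(d)|\,d$, and $\sum_d|g(d)|\,d$ diverges. You must split on $r=\operatorname{rad}(d)$: for $r\le\sqrt{z}$ all admissible $d$ lie below $z$ and contribute $\frac{c}{z}\sum_{r\le\sqrt{z}}\mu(r)^2\sigma(r)/\varphi(r)=c\,H_1(\sqrt{z})/z=O(z^{-1/2})$ via Lemma~\ref{l:H-asymp}; only the range $r>\sqrt{z}$ is a genuine tail. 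Once this is fixed the method does yield $O(z^{-1/2})$ with an explicit constant, though the numerics are a bit heavier than in the paper's route because you now need both Lemma~\ref{l:H-asymp} and tail bounds for two separate Euler products.
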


Here, $C_0= 0.5772156\dots$ denotes the Euler-Mascheroni constant.

\begin{proof}
The proof is similar to the proof of Lemma \ref{l:H-asymp}. Let $Q(z)$ be as in \eqref{e:Q-bound} and let $R(z) = Q(z) - \frac 6{\pi^2} z$. Then partial summation yields
\begin{equation}
 \sum_{n\leq z} \frac{\mu(n)^2}{n} = \frac{6}{\pi^2}\log (z) + \frac{Q(z)}{z} + \int_{1}^\infty \frac{R(t)}{t^2}\, dt - \int_{z}^\infty \frac{R(t)}{t^2}\, dt.
\end{equation}
Consequently, we have
\begin{equation}\label{e:aux1}
 \sum_{n\leq z} \frac{\mu(n)^2}{n} = \frac{\log (z)}{\zeta(2)} + A + \Theta\left(\frac{2.04}{\sqrt{z}}\right),
\end{equation}
where the constant is given by
\[
A = \frac{C_0}{\zeta(2)}-2\sum_d \frac{\mu(d)\log(d)}{d^2} = \frac{C_0}{\zeta(2)}-2\frac{\zeta'(2)}{\zeta(2)^2}
\]
(see e.g. \cite[Lemma 5.4.2]{Bruedern1995}).

Now let
\begin{align*}
 h(s) &= \prod_p\left(1+\frac{1}{(1+p^{s})(p-1)}\right), \\
 \tilde h(s) &= \prod_p\left(1+\frac{1}{(p^{s}-1)(p-1)}\right).
\end{align*}
Then we have
\begin{equation}\label{e:S-gen-ep}
 \frac{\zeta(s)}{\zeta(2s)} h(s) = \sum_n \frac{n\,\mu(n)^2}{\varphi(n)}n^{-s}
\end{equation}
and there are $c_n$ such that
\begin{equation*}
h(s) = \sum_n c_n n^{-s} \eand \tilde h(s) = \sum_n \abs{c_n} n^{-s}
\end{equation*}
holds in $\Re(s)>0$. Combining \eqref{e:S-gen-ep} and \eqref{e:aux1} we now get
\begin{align}\label{e:S-bound1}
 \sum_{n\leq z} \frac{\mu(n)^2}{\varphi(n)} &= \sum_{d\leq z} \frac{c_d}{d} \sum_{m\leq z/d} \frac{\mu(m)^2}{m}  \\
&= \sum_{d\leq z} \left(\frac{\log(z/d)}{\zeta(2)} + A\right) \frac{c_d}{d} + \Theta\left(\tilde h\Bigl(\frac 12\Bigr) \frac{2.04}{\sqrt{z}}\right)\notag.
\end{align}
Since 
\begin{equation*}
 h(1) = \prod_p\left( 1+p^{-2}\frac{1}{1-p^{-2}}\right) = \zeta(2),
\end{equation*}
we have
\begin{equation}\label{e:rm2}
\sum_{d\leq z} \left(\frac{\log(z/d)}{\zeta(2)} + A\right) \frac{c_d}{d} = \log(z) + B + \Theta\left( \sum_{d>z}\frac{\abs{c_d}}{d} \left(\frac{\log(dz)}{\zeta(2)} + \abs{A}\right)  \right)
\end{equation}
for a suitable constant $B$.

To estimate the $\Theta$-term we note that
\begin{equation*}\label{e:log-bound}
\log(z) \leq 1.56 z^{1/8}
\end{equation*}
holds for $z\geq 10^9$ and we therefore have
\begin{equation}\label{e:rm1}
 \sum_{d>z}\frac{\abs{c_d}}{d} \left(\frac{\log(z) + \log(d)}{\zeta(2)} + \abs{A}\right) \leq \frac{1}{\sqrt{z}} \left(\frac{2\cdot 1.56}{\zeta(2)} + 10^{-9/8} \abs A\right) \tilde h\left(\frac 38\right)
\end{equation}
for such $z$.

Next we estimate $\tilde h (\sigma)$. For $\sigma\geq 1/4$ and $t\geq 20$ the inequality
\[
t^{1+\sigma} - t^\sigma -t +1 \geq t^{1+\sigma}/2
\]
holds so that we get
\begin{equation}\label{e:ht-bound}
 \tilde h(\sigma) \leq \left(\prod_{p<10000} \frac{1+(p^\sigma-1)^{-1}(p-1)^{-1}}{(1+p^{-1-\sigma})^2} \right)\frac{\zeta(1+\sigma)^2}{\zeta(2+2\sigma)^2}
\end{equation}
for $\sigma\geq \frac 14$. This gives the bounds $\tilde h(3/8)\leq 19$ and $\tilde h(1/2)\leq 9.4$. For $\abs{A}$ we use the bound
\begin{equation*}
 \abs A \leq \frac{C_0}{\zeta(2)} + 2\abs{\sum_{d\leq 100} \frac{\mu(d)\log(d)}{d^2}} + 2\int_{100}^\infty \frac{\log(t)}{t^2}\,dt \leq 1.8.
\end{equation*}
The error term in \eqref{e:rm2} is thus bounded by $(1.9 + 0.14)\cdot 19/\sqrt{z} \leq 38.8 / \sqrt{z}$ for $z\geq 10^9$. If we use this in \eqref{e:S-bound1}, we arrive at
\begin{equation*}
 \sum_{n\leq z} \frac{\mu(n)^2}{\varphi(n)} = \log(z) +  B + \Theta\left(\frac{58}{\sqrt{z}}\right).
\end{equation*}
 For a proof that $B$ coincides with the constant in \eqref{e:S-asymp} see e. g.
\cite[Lemma 5.4.2]{Bruedern1995}.

\end{proof}

\begin{proof}[Proof of Theorem \ref{t:wbti2}]
Let $g$ and $Y$ be as in the proof of Theorem \ref{t:wbti1}. First, we consider the case $Y>4\times 10^{18}$. From Proposition \ref{s:wss} we get
\begin{equation*}
\sum_{p\in I} g(p) \leq \frac{Y}{S_1(z)} + \frac{H_1^2(z)}{S_1^2(z)} + \pi(z)
\end{equation*}
for every $z\geq 1$. Here we choose $z=\sqrt{Y}/2$. Then we have $z>10^9$ and Lemma \ref{l:S-asymp} gives 
\begin{multline*}
S_1(z) \geq \log(z) + C_0 + \sum_{p<1000} \frac{\log(p)}{p(p-1)} - 0.002 \\ \geq \log(z) + 1.32  \geq \frac{1}{2} \bigl(\log(Y) + 1.25\bigr).
\end{multline*}
From Lemma \ref{l:H-asymp} we get
\begin{equation*}
H_1(z) \leq \left(\frac {15}{\pi^2} + \frac{47}{\sqrt{10^9}}\right) z \leq 1.53\, z,
\end{equation*}
so we have
\begin{equation*}
\frac{Y}{S_1(z)} \leq 2 \frac{Y}{\log(Y) + 1.25} = 2\frac{Y}{\log(Y)} - 2.5 \frac{Y}{\log(Y)(\log(Y)+1.25)}
\end{equation*}
and
\begin{equation*}
\frac{H_1^2(z)}{S_1^2(z)} \leq 2.4 \frac{Y}{(\log(Y)+1.25)^2}.
\end{equation*}
Using the trivial bound $\pi(z)\leq z$  we thus arrive at
\begin{align*}
\frac{Y}{S_1(z)} + \frac{H_1^2(z)}{S_1^2(z)} + \pi(z) 
  &\leq 2\frac{Y}{\log(Y)} + \frac{\sqrt Y}{2} - \frac{ 0.1Y}{\log(Y)(\log(Y)+1.25)}\\
  &< 2 \frac Y{\log(Y)}.
\end{align*}

Next, we consider the case $1<Y<20000$. From Proposition \ref{s:wes} we obtain the bound
\begin{equation*}
\sum_{p\in I} g(p) \leq Y \prod_{p\leq z}\left(1-\frac{1}{p}\right) + 2^{\pi(z)} + \pi(z).
\end{equation*}
It thus suffices to show that for any such $Y$ there exists a number $z$ such that
\begin{equation*}
 Y \prod_{p\leq z}\left(1-\frac{1}{p}\right) + 2^{\pi(z)} + \pi(z) < 2 \frac{Y}{\log(Y)}
\end{equation*}
holds, but this is carried out in \cite[p. 130]{MV1973}. 

It remains to treat the case $20000\leq Y \leq 4\times 10^{18}$. This can be done numerically: the function $t\mapsto t/\log(t)$ is concave for $t>e^2$. So if for fixed $z$
\begin{equation}\label{e:test}
\frac{Y}{S_1(z)} + \frac{H_1^2(z)}{S_1^2(z)} + \pi(z) < 2\frac Y {\log(Y)},
\end{equation}
holds for $Y\in\{Y_0,Y_1\}$, then it holds for all $Y\in [Y_0,Y_1]$. The inequality \eqref{e:test} was verified with a simple C-routine for all $z\in [50,2\times 10^9)\cap \Z$ and $Y\in\lset 4z^2, 4(z+1)^2\rset$. Therefore, \eqref{e:test} holds for every $Y\in[4\times 50^2, 1.6\times 10^{19}]$ with $z=\sqrt{Y}/2$.
\end{proof}

\section{Improvements}
The results in Theorem \ref{e:wbti2} can be improved by sharpening the estimates for $S_k$ and $H_k$ as pointed out in \cite{Lint1965}. It can also be seen that the equivalent of the stronger result in \cite{MV1973},
\[
\sum_{\substack{p\in I\\p\equiv l\bmod k}} f(p) < \frac{2}{\varphi(k)}\frac{\nrm{f}_{1,I}}{\log(\rho_I(f)/k)},
\]
holds at least for $\rho_I(f) \gg_\eps k^{1+\eps}$ and every $\eps>0$. However, it is not clear whether it can be shown to hold for $\rho_I(f) > k$.

\section*{Acknowledgment}
The author wishes to thank Jens Franke for suggesting this problem. He also wishes to thank the anonymous referee for his/her helpful comments, especially for his/her suggestions to improve the error terms in Lemma \ref{l:H-asymp} and Lemma \ref{l:S-asymp}.

\bibliographystyle{amsalpha}
\bibliography{wbtibib}

\end{document}